\documentclass[12pt]{article}
\usepackage{amsmath, amsthm}
\usepackage{enumerate}
\usepackage{amssymb}
\usepackage{geometry}
\geometry{
	a4paper,
	total={170mm,257mm},
	left=20mm,
	top=20mm,
}
\newtheorem{thm}{Theorem}[section]
\newtheorem{exam}{Example}[section]
\newtheorem{cor}[thm]{Corollary}

\newtheorem{prop}[thm]{Proposition}

\theoremstyle{definition}
\newtheorem{defn}{Definition}[section]
\theoremstyle{remark}
\newtheorem{rem}{Remark}[section]

\DeclareMathOperator{\tc}{\xrightarrow[]{\tau}}
\DeclareMathOperator{\tcc}{\xrightarrow[]{\acute{\tau}}}

\newcommand{\eval}[2][\right]{\relax
  \ifx#1\right\relax \left.\fi#2#1\rvert}

\begin{document}
\title{\bf Convergence via filter in locally solid Riesz spaces} 
\maketitle

\author{\centering Abdullah AYDIN \\ \bigskip  \small  
	Department of Mathematics, Mu\c{s} Alparslan University, Mu\c{s}, Turkey. \\}

\bigskip

\abstract{Let $(E,\tau)$ be a locally solid vector lattice. A filter $\mathcal{F}$ on the set $E$ is said to be converge to a vector $e\in E$ if, each zero neighborhood set $U$ containing $e$, $U$ belongs to $\mathcal{F}$. We study on the concept of this convergence and give some basic properties of it. 

\bigskip
\let\thefootnote\relax\footnotetext
{Keywords: filter, locally solid Riesz space, vector lattice 
	
\text{2010 AMS Mathematics Subject Classification:} 46A40, 46A55

e-mail: a.aydin@alparslan.edu.tr}

\section{Introduction and Main Results}
Let give some basic notation and terminology that will be used in this paper. Let $E$ be real vector space. Then $E$ is called ordered vector space if it has an order relation $\leq$ that means it is reflexive, antisymmetric and transitive, and also it satisfies the axioms; if $y\leq x$ then $y+z\leq x+z$ for all $z\in E$, and if $y\leq x$ then $\lambda y\leq \lambda x$ for all $0\leq \lambda$. 
An ordered vector $E$ is said to be {\em vector lattice} (or, {\em Riesz space}) if, for each pair of vectors $x,y\in E$, the supremum $x\vee y=\sup\{x,y\}$ and the infimum $x\wedge y=\inf\{x,y\}$
both exist in E. Also, $x^+:=x\vee 0$, $x^-:=(-x)\vee0$, and $\lvert x\rvert:=x\vee(-x)$ are called the {\em positive} part, the {\em negative} part, and the {\em absolute value} of $x$, respectively. In a vector lattice $E$, a subset $A$ is called as {\em solid} if, for $y\in A$ and $x\in E$ with $\lvert x\rvert\leq\lvert y\rvert$, we have $x\in A$. Also, two vector $x$, $y$ in a vector lattice is said to be {\em disjoint} whenever $\lvert x\rvert\wedge\lvert y\rvert=0$; for more details see \cite{ABur,ABPO}.

Let $E$ be vector lattice $E$ and $\tau$ be a linear topology on it. Then $(E,\tau)$ is said a {\em locally solid vector lattice} (or, {\em locally solid Riesz space}) if $\tau$ has a base which takes place with solid sets, for more details on these notions see \cite{ABur,ABPO,AB}. It is known that every linear topology $\tau$ on a vector space $E$ has a base $\mathcal{N}$ for the zero neighborhoods satisfying the followings; for each $V\in \mathcal{N}$, we have $\lambda V\subseteq V$ for all scalar $\lvert \lambda\rvert\leq 1$; for any $V_1,V_2\in \mathcal{N}$ there is another $V\in \mathcal{N}$ such that $V\subseteq V_1\cap V_2$; for each $V\in \mathcal{N}$ there exists another $U\in \mathcal{N}$ with $U+U\subseteq V$; for any scalar $\lambda$ and for each $V\in \mathcal{N}$, the set $\lambda V$ is also in $\mathcal{N}$; see for example \cite{AA,Tr}. Hence, every locally solid vector lattice satisfies these properties. Also, it follows from \cite[Thm.2.28]{ABur} that a linear topology $\tau$ on a vector lattice $E$ is a locally solid iff it is generated by a family of Riesz pseudonorms $\{\rho_j\}_{j\in J}$, where {\em Riesz pseudonorm} is a realvalued map $\rho$ on a vector space $E$ if it satisfies the following conditions; $\rho(x)\geq0$ for all $x\in X$; if $x=0$ then $\rho(x)=0$; $\rho(x+y)\leq\rho(x)+\rho(y)$ for all $x,\ y\in X$; if $\lim\limits_{n\to\infty}\lambda_n=0$ in $\mathbb{R}$ then $\rho(\lambda_nx)\to 0$ in $\mathbb{R}$ for all $x\in X$; if $\lvert x\rvert\leq \lvert y\rvert$ then $\rho(x)\leq \rho(y)$. Moreover, if a family of Riesz pseudonorms generates a locally solid topology $\tau$ on a vector lattice $E$ then $x_\alpha \tc x$ iff $\rho_j(x_\alpha-x)\to 0$ in $\mathbb{R}$ for each $j\in J$. In this article, unless otherwise, the pair $(E,\tau)$ refers to as a locally solid vector lattice, and the topologies in locally solid vector lattices are generated by families of Riesz pseudonorms $\{\rho_j\}_{j\in J}$. In this paper, unless otherwise, when we mention a zero neighborhood, it means that it always belongs to a base that holds the above properties. Recently, there are some studies on locally solid Riesz spaces; see for example \cite{AA,L,Z}. 

Next, we give the concept of filters. Let $X$ be a set. A subset $\mathcal{F}$ of the power set of $X$ is said to be {\em filter} on $X$ if the followings hold; $\emptyset\notin \mathcal{F}$; if $A\in \mathcal{F}$ and $A\subseteq B$ then $B\in \mathcal{F}$; $\mathcal{F}$ is closed under finite intersections; see \cite{AB}. The second condition says that the set $X$ belongs to the filter on it. The filter can be defined tanks to its base. A nonempty subset $\mathcal{B}\subseteq\mathcal{F}$ is called a {\em filter base} for a filter $\mathcal{F}$, if $\mathcal{F}=\{F\subseteq X:\exists B\in\mathcal{B}, B\subseteq F\}$. A base $\mathcal{B}$ satisfies the following properties; $\mathcal{B}$ is nonempty; each $B\in\mathcal{B}$ is nonempty; for each $B_1, B_2\in\mathcal{B}$, there is another $B\in\mathcal{B}$ such that $B\subseteq B_1\cap B_2$. 

Filter can be defined with nets. A given partially ordered set $I$ is called {\em directed} if, for each $a_1,a_2\in I$, there is another $a\in I$ such that $a\geq a_1$ and $a\geq a_2$. A function from a directed set $I$ into a set $E$ is called a {\em net} in $E$. Now, we give a relation between net and filter convergence. Let $(x_\alpha)_{(\alpha\in I)}$ be a net in the set $E$. The filter $\mathcal{F}$ which is associated of $(x_\alpha)_{(\alpha\in I)}$ is defined as follows; let $\hat{x}_\alpha=\{x_\alpha:\alpha_0\in I, \alpha\geq\alpha_0\}$, and so the collection $\mathcal{B}=\{\hat{x}_\alpha:\alpha\in I\}$ is a filter base and the filter that is generated by $\mathcal{B}$ is the associated filter of $(x_\alpha)_{(\alpha\in A)}$. So, we can give the following natural example.
\begin{exam}
Let $E$ be a set and $(x_\alpha)_{(\alpha\in I)}$ be a net in $E$. The elementary filter associated to $(x_\alpha)_{(\alpha\in I)}$ is
$$
\mathcal{F}_{(x_\alpha)}=\{F\subseteq E:\exists \alpha_0, \ x_\alpha\in F\ \text{for all} \ \alpha\geq \alpha_0\}.
$$
\end{exam}

The filter convergence is defined on topological spaces with respect to the neighborhood of limit points; see \cite[Def.3.7]{J}. In here, we define this concept on locally solid vector lattices.
\begin{defn}
Let $(E,\tau)$ be a locally solid vector lattice and $\mathcal{F}$ be a filter on the set $E$. A vector $e\in E$ is said to be a {\em limit of $\mathcal{F}$} (or, $\mathcal{F}$ is said to  {\em converge to $e$}) if each zero neighborhood set containing $e$ belongs to $\mathcal{F}$, abbreviated as $\mathcal{F}\tc e$. Also,  a vector $x\in E$ is said to be a {\em cluster} point of $\mathcal{F}$ if each zero neighborhood set containing $x$ intersects every member of $\mathcal{F}$.
\end{defn}

\begin{exam}
Suppose $(E,\tau)$ is a locally solid vector lattice. For a vector $e\in E$, the filter generated by $e$ is 
$$
\mathcal{F}_e=\{U\subseteq E:U \ \text{is zero neighborhood and contains}\ e\}.
$$
So, we can see $\mathcal{F}\tc e$
\end{exam}
Consider a topological convergence net $(x_\alpha)_{(\alpha\in I)}\tc e\in E$. Then the elementary filter associated to $(x_\alpha)_{(\alpha\in I)}$ is also convergent to $e$ since every zero neighborhood containing $e$ belongs to that filter. 

\begin{rem}
Let $(E,\tau)$ be a locally solid vector lattice. Then

$(i)$ If a filter $\mathcal{F}\tc e$ then $e$ is a cluster point of $\mathcal{F}$. Indeed, let $U$ be zero neighborhood set and it contains $e$, and $F\in\mathcal{F}$. Since $U$ and $F$ in $\mathcal{F}$, we have $U\cap F\in \mathcal{F}$. Thus, we get $U\cap F\neq\emptyset$ since $\emptyset$ is not in the filter $\mathcal{F}$.

$(ii)$ Let $\mathcal{F}_1$, $\mathcal{F}_2$ be filters on $E$ with $\mathcal{F}_1\subseteq\mathcal{F}_2$. So, if $\mathcal{F}_1\tc e$ then $\mathcal{F}_2\tc e$ for $e\in E$. Indeed, every zero neighborhood containing $e$ is in $\mathcal{F}_1$, and so is in $\mathcal{F}_2$. Hence, $\mathcal{F}_2\tc e$.
\end{rem}

In the filter convergence, the limit point may not be unique. That means a filter $\mathcal{F}$ on a locally solid vector lattice $(E,\tau)$ can be convergence both $e_1, \ e_2\in E$.
\begin{prop}
Let $(E,\tau)$ be a locally solid vector lattice, and $\mathcal{F}$ be a filter on $E$. Then the followings hold;

\item[(i)] If $\mathcal{F}\tc e$ for some $e\in E$ then $\mathcal{F}\tc e+x$ for all $x \in E$ whenever $e$ and $x$ are positive, or disjoint;
		
\item[(ii)] If $\mathcal{F}\tc e$ for some $e\in E$ then $\mathcal{F}\tc \lvert e\rvert$

\item[(iii)]  If $\mathcal{F}\tc e$ for some $e\in E$ then $\mathcal{F}\tc e^+$ and $\mathcal{F}\tc e^-$
\end{prop}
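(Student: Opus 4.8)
Here is the plan I would follow.

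My plan is to deduce all three parts from a single monotonicity principle. Recall that, by hypothesis, every zero neighbourhood $U$ is a solid set, so if $g\in U$ and $|f|\le|g|$ then $f\in U$. From this I would first record the principle: \emph{if $\mathcal{F}\tc e$ and $|e|\le|f|$, then $\mathcal{F}\tc f$}. Indeed, let $U$ be any zero neighbourhood containing $f$; since $U$ is solid and $|e|\le|f|$, it also contains $e$, hence $U\in\mathcal{F}$ because $\mathcal{F}\tc e$; as $U$ was an arbitrary zero neighbourhood containing $f$, this means $\mathcal{F}\tc f$.

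Granting the principle, parts (i) and (ii) become one line each. For (i): if $e$ and $x$ are positive then $|e|=e\le e+x=|e+x|$; and if $e$ and $x$ are disjoint then $|e|\le|e|+|x|=|e+x|$ by the standard identity for disjoint vectors. Either way $|e|\le|e+x|$, so the principle gives $\mathcal{F}\tc e+x$. For (ii): taking $f=|e|$ we have $|e|\le|f|$ (with equality, since $|f|=|e|$), and the principle gives $\mathcal{F}\tc|e|$.

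Part (iii) is the step I expect to cause the real trouble. Using the principle directly would require $|e|\le|e^{+}|$ and $|e|\le|e^{-}|$, whereas in fact $|e^{+}|=e^{+}\le|e|$ and $|e^{-}|=e^{-}\le|e|$, so the inequalities run the wrong way. The next thing I would try is to combine (ii) with the disjoint decomposition $|e|=e^{+}+e^{-}$, but (i) only permits passing \emph{from} $\mathcal{F}\tc e^{+}$ \emph{to} $\mathcal{F}\tc e^{+}+e^{-}=|e|$, not conversely, so that route does not close the argument either. The obstruction looks structural: solidity delivers convergence-monotonicity only in the direction of \emph{increasing} absolute value, while the maps $e\mapsto e^{+}$ and $e\mapsto e^{-}$ decrease it. I would therefore re-examine the hypotheses of (iii) before writing a proof: it is immediate when $e\ge 0$ (then $e^{+}=e$, $e^{-}=0$), and in general it seems to need an extra assumption — for instance, if one also assumes $\mathcal{F}\tc 0$, then since $0=|0|\le|e^{\pm}|$ the principle applies at once. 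In fact a one-line example (in $E=\mathbb{R}$, the filter generated by the symmetric intervals of radius greater than $1$ converges to $-1$ but not to $0=(-1)^{+}$) suggests that, as stated, (iii) ought to be weakened or supplied with an additional hypothesis.
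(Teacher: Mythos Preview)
Your treatment of (i) and (ii) is exactly the paper's argument, only packaged more cleanly: the ``monotonicity principle'' you isolate --- if $|e|\le|f|$ and $\mathcal{F}\tc e$ then $\mathcal{F}\tc f$ --- is precisely the step the paper performs in each case by appealing to solidity of $U$ to pull $e$ back into $U$ from $e+x$ or from $|e|$.

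On (iii) you have actually gone further than the paper. The paper's proof of (iii) reads: ``Assume $\mathcal{F}\tc e$ and $U$ is a zero neighborhood which contains $e^{+}$. By the formula $|e|=e^{+}+e^{-}$, and by using the solidness of $U$, we get $e^{+}\in U$. Therefore, we have $U\in\mathcal{F}$ since $\mathcal{F}\tc e$.'' This is circular: $e^{+}\in U$ was the hypothesis, and the conclusion $U\in\mathcal{F}$ is drawn from $\mathcal{F}\tc e$ without ever establishing $e\in U$. Exactly as you diagnosed, solidity only gives $e\in U$ from $e^{+}\in U$ when $|e|\le|e^{+}|$, which is the wrong direction. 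Your $\mathbb{R}$--counterexample (the filter generated by the solid intervals $(-r,r)$ with $r>1$ converges to $-1$ but not to $0=(-1)^{+}$, since $(-\tfrac12,\tfrac12)$ is a solid zero neighbourhood containing $0$ but not belonging to the filter) is correct under the paper's own convention that ``zero neighbourhood'' means a member of the solid base. So your proposal is right to flag (iii) as needing an extra hypothesis; the paper's argument does not establish it.
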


\begin{proof}
$(i)$ Let $U$ be a zero neighborhood that contains $e+x$. So, we show $U\in \mathcal{F}$. Under the condition of positivity of $e$ and $x$, we have $e, x\in U$ since $U$ is a solid set and $e,x\leq e+x$. Therefore, by using the convergence $\mathcal{F}$ to $e$, we get $U\in \mathcal{F}$. On the other hand, if $e$ and $x$ are disjoint then we have $\lvert e+x\rvert=\lvert e\rvert+\lvert x\rvert$; see \cite[Thm.1.7(2-7)]{ABPO}. So, by solidness of $U$, we get $e, x\in U$. Therefore, $U\in \mathcal{F}$ since $\mathcal{F}\tc e$.

$(ii)$ Suppose $\mathcal{F}\tc e$ and $U$ is a zero neighborhood containing $\lvert e\rvert$. By the formula $\lvert e\rvert\leq \big\lvert \lvert e\rvert\big\rvert=\lvert e\rvert$ and by the solidness of $U$, we have $e\in U$. Therefore, $U\in \mathcal{F}$ since $\mathcal{F}\tc e$.

$(iii)$ Assume $\mathcal{F}\tc e$ and $U$ is a zero neighborhood which contains $e^+$. By the formula $\lvert e\rvert=e^++e^-$; see \cite[Thm.1.5(2)]{ABPO}, and by using the solidness of $U$, we get $e^+\in U$. Therefore, we have $U\in \mathcal{F}$ since $\mathcal{F}\tc e$. The other part of the proof is analog.
\end{proof}

In the next three results, we give some basic properties of filter convergence on locally solid vector lattices.
\begin{thm}\label{filter conv with addition}
Let $(E,\tau)$ be a locally solid vector lattice. If $\mathcal{F}_1$ and $\mathcal{F}_2$ are filters on $E$ such that $\mathcal{F}_1\tc e$ and $\mathcal{F}_2\tc x$ for some $e, x\in E$ then the set $\mathcal{F}=\{F_1\cup F_2:F_1\in\mathcal{F}_1\ \text{and}\ F_2\in\mathcal{F}_2\}$ is also a filter on the set $E$, and $\mathcal{F}\tc e+x$ whenever $e,\ x$ are positive, or disjoint. 
\end{thm}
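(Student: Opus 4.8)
The plan is to treat the two assertions in turn: first that $\mathcal{F}$ is a filter on $E$, and then the convergence $\mathcal{F}\tc e+x$, which will reduce to the argument already carried out in the preceding Proposition.

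For the first assertion I would note that $\mathcal{F}$ is nothing but $\mathcal{F}_1\cap\mathcal{F}_2$. On the one hand, if $A\in\mathcal{F}_1\cap\mathcal{F}_2$ then $A=A\cup A$ exhibits $A$ as a member of $\mathcal{F}$; on the other hand, if $A=F_1\cup F_2$ with $F_i\in\mathcal{F}_i$, then $F_1\subseteq A$ and $F_2\subseteq A$ force $A\in\mathcal{F}_1$ and $A\in\mathcal{F}_2$ by upward closure of each $\mathcal{F}_i$. Granting this identification, the filter axioms for $\mathcal{F}$ are immediate: $\emptyset$ belongs to neither $\mathcal{F}_1$ nor $\mathcal{F}_2$, hence not to their intersection; a superset of a member of $\mathcal{F}_1\cap\mathcal{F}_2$ lies in each $\mathcal{F}_i$, hence in the intersection; and a finite intersection of members of $\mathcal{F}_1\cap\mathcal{F}_2$ lies in each $\mathcal{F}_i$, hence in the intersection. (If one prefers to argue directly from the pairwise-union description, one checks that $F_1\cup F_2=\emptyset$ is impossible, that any $B\supseteq F_1\cup F_2$ equals $(F_1\cup B)\cup(F_2\cup B)$ with $F_i\cup B\in\mathcal{F}_i$, and that $(F_1\cup F_2)\cap(G_1\cup G_2)\supseteq(F_1\cap G_1)\cup(F_2\cap G_2)\in\mathcal{F}$, so closure under intersections follows from upward closure.)

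For the convergence, let $U$ be a zero neighborhood containing $e+x$; I must show $U\in\mathcal{F}$, equivalently $U\in\mathcal{F}_1$ and $U\in\mathcal{F}_2$. Exactly as in the proof of the preceding Proposition, the hypothesis on $e$ and $x$ yields $e\in U$ and $x\in U$: if $e,x\geq0$ then $e,x\leq e+x=\lvert e+x\rvert$, while if $\lvert e\rvert\wedge\lvert x\rvert=0$ then $\lvert e\rvert,\lvert x\rvert\leq\lvert e\rvert+\lvert x\rvert=\lvert e+x\rvert$ by \cite[Thm.1.7(2-7)]{ABPO}; in both cases solidness of $U$ gives $e,x\in U$. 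Since $\mathcal{F}_1\tc e$, the zero neighborhood $U$ containing $e$ belongs to $\mathcal{F}_1$, and since $\mathcal{F}_2\tc x$, it likewise belongs to $\mathcal{F}_2$; therefore $U\in\mathcal{F}_1\cap\mathcal{F}_2=\mathcal{F}$, proving $\mathcal{F}\tc e+x$.

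I do not anticipate any genuine obstacle. The only place calling for a little care is verifying that the set defined by pairwise unions really is a filter, and the identification $\mathcal{F}=\mathcal{F}_1\cap\mathcal{F}_2$ disposes of that cleanly; the rest is a reuse of the solidness argument and of the disjointness identity $\lvert e+x\rvert=\lvert e\rvert+\lvert x\rvert$ from the earlier proposition.
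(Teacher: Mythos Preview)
Your proof is correct and follows the same approach as the paper: verify the filter axioms for $\mathcal{F}$ and then use solidness of $U$ together with $e,x\le e+x$ (positive case) or $\lvert e+x\rvert=\lvert e\rvert+\lvert x\rvert$ (disjoint case) to get $U\in\mathcal{F}_1$ and $U\in\mathcal{F}_2$. Your identification $\mathcal{F}=\mathcal{F}_1\cap\mathcal{F}_2$ is a tidy repackaging that the paper does not state explicitly, but the paper's direct verification of the axioms amounts to the same thing.
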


\begin{proof}
Firstly, we show that $\mathcal{F}$ is a filter. $(i)$ $\emptyset \notin \mathcal{F}$ since $\emptyset \notin \mathcal{F}_1$ and $\emptyset \notin \mathcal{F}_2$; if $F_1\in\mathcal{F}_1$ and $F_2\in\mathcal{F}_2$, and $F_1\cup F_2\subseteq A\subseteq E$ then $A\in \mathcal{F}_1$ and $A\in \mathcal{F}_2$, so $A\in \mathcal{F}$; if $F_1,G_1\in\mathcal{F}_1$ and $F_2,G_2\in\mathcal{F}_2$ then $F_1\cup F_2$, $G_1\cup G_2$ and $(F_1\cup F_2)\cap(G_1\cup G_2)$ in both $\mathcal{F}_1$ and $\mathcal{F}_2$, and so $(F_1\cup F_2)\cap(G_1\cup G_2)\in \mathcal{F}$.

Next, for the first case, we show $\mathcal{F}\tc e+x$. Let $U$ be a zero neighborhood and it contains $e+x$. By the properties of zero neighborhoods in locally solid vector lattice, we have $e,\ x\in U$ since they are positive and so that $e,x\leq e+x$. Therefore, since $\mathcal{F}_1$ and $\mathcal{F}_1$ are filters on the set $E$, and also $\mathcal{F}_1\tc e$ and $\mathcal{F}_2\tc x$, we get $U\in \mathcal{F}_1$ and $U\in \mathcal{F}_2$. So, we get $U\in \mathcal{F}$.

Now, suppose $e$ and $x$ are disjoint in $E$. Then we have $\rvert e+x\rvert=\lvert e\rvert+\lvert x\rvert$; see \cite[Thm.1.7(2-7)]{ABPO}. So, for given any zero neighborhood $U$ containing $e+x$, we have $e, \ x \in U$. Then the proof follows like first case.
\end{proof}

\begin{thm}
Suppose $(E,\tau)$ is a locally solid vector lattice, and $\mathcal{F}_1$ and $\mathcal{F}_1$ are filters on the set $E$ such that $\mathcal{F}_1\tc e$ and $\mathcal{F}_2\tc x$ for some $e,x\in E$. Then the class $\mathcal{F}=\{F_1\cap F_2:F_1\in\mathcal{F}_1, F_2\in\mathcal{F}_2, \ \text{and} \ F_1\cap F_2\neq \emptyset \}$ is also a filter on $E$, and also $\mathcal{F}\tc e+x$  whenever $e,\ x$ are positive, or disjoint.
\end{thm}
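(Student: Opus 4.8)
The plan is to follow the proof of \thmref{filter conv with addition} almost verbatim, with intersections in place of unions, and to isolate the verification that $\mathcal{F}$ is closed under finite intersections as the one genuinely delicate point.

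First I would dispatch the easy filter axioms. By the very description of $\mathcal{F}$ we have $\emptyset\notin\mathcal{F}$. For upward closure, suppose $F_1\cap F_2\in\mathcal{F}$ and $F_1\cap F_2\subseteq A\subseteq E$; the trick is to write $A=(F_1\cup A)\cap(F_2\cup A)$. Since $F_1\subseteq F_1\cup A\in\mathcal{F}_1$ and $F_2\subseteq F_2\cup A\in\mathcal{F}_2$, and $A$ is nonempty because it contains $F_1\cap F_2\neq\emptyset$, this exhibits $A$ as a member of $\mathcal{F}$.

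For the convergence $\mathcal{F}\tc e+x$ the argument is essentially the one already used. Let $U$ be a zero neighborhood containing $e+x$. If $e$ and $x$ are positive then $0\le e\le e+x$ and $0\le x\le e+x$, so $\lvert e\rvert\le\lvert e+x\rvert$ and $\lvert x\rvert\le\lvert e+x\rvert$; if $e$ and $x$ are disjoint then $\lvert e+x\rvert=\lvert e\rvert+\lvert x\rvert$, which again dominates both $\lvert e\rvert$ and $\lvert x\rvert$, by \cite[Thm.1.7(2-7)]{ABPO}. In either case solidity of $U$ forces $e\in U$ and $x\in U$, whence $U\in\mathcal{F}_1$ (as $\mathcal{F}_1\tc e$) and $U\in\mathcal{F}_2$ (as $\mathcal{F}_2\tc x$). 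Since $U=U\cap U$ and $U\ni e+x$ is nonempty, $U\in\mathcal{F}$, so $\mathcal{F}\tc e+x$.

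The step I expect to be the main obstacle is closure of $\mathcal{F}$ under finite intersections. Given $F_1\cap F_2$ and $G_1\cap G_2$ in $\mathcal{F}$, one rewrites $(F_1\cap F_2)\cap(G_1\cap G_2)=(F_1\cap G_1)\cap(F_2\cap G_2)$ with $F_1\cap G_1\in\mathcal{F}_1$ and $F_2\cap G_2\in\mathcal{F}_2$, so the set has the right shape; the whole difficulty is that it need not be nonempty, since a member of $\mathcal{F}_1$ and a member of $\mathcal{F}_2$ can be disjoint even when each of $F_1\cap F_2$ and $G_1\cap G_2$ is nonempty. To close this gap one needs the extra input that every member of $\mathcal{F}_1$ meets every member of $\mathcal{F}_2$ -- equivalently, that the supremum filter $\mathcal{F}_1\vee\mathcal{F}_2$ exists -- which also renders the clause ``$F_1\cap F_2\neq\emptyset$'' in the definition of $\mathcal{F}$ automatic. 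I would therefore carry out the proof under this compatibility assumption (or restrict to filters admitting a common refinement); with it in hand the nonemptiness above is immediate and everything else is as above.
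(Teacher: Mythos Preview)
Your approach mirrors the paper's: verify the filter axioms for $\mathcal{F}$ and then repeat the solidity argument from Theorem~\ref{filter conv with addition} to get $\mathcal{F}\tc e+x$. Your upward-closure trick $A=(F_1\cup A)\cap(F_2\cup A)$ is in fact cleaner than the paper's, which simply asserts ``$A\in\mathcal{F}_1$ and $A\in\mathcal{F}_2$'' from $F_1\cap F_2\subseteq A$ without saying why.

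More to the point, the gap you flag in closure under finite intersections is real, and the paper does not close it either. The paper rewrites $(F_1\cap F_2)\cap(G_1\cap G_2)=(F_1\cap G_1)\cap(F_2\cap G_2)$ and then declares this set nonempty under the unexplained clause ``such that intersection of for each pair of them is non empty,'' which is exactly the point at issue. Your diagnosis is correct: without the compatibility hypothesis that every member of $\mathcal{F}_1$ meets every member of $\mathcal{F}_2$ (equivalently, that $\mathcal{F}_1\vee\mathcal{F}_2$ exists), the class $\mathcal{F}$ need not be closed under finite intersections and the statement fails as written. So your proposal matches the paper's route and, where it diverges, it does so by being more honest about a step the paper glosses over.
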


\begin{proof}
We show $\mathcal{F}$ is a filter. $(i)$ $\emptyset \notin \mathcal{F}$; if $F_1\in\mathcal{F}_1$ and $F_2\in\mathcal{F}_2$, and $F_1\cap F_2\subseteq A\subseteq E$ then $A\in \mathcal{F}_1$ and $A\in \mathcal{F}_2$, so we get $A\in \mathcal{F}$; if $F_1,G_1\in\mathcal{F}_1$ and $F_2,G_2\in\mathcal{F}_2$ such that intersection of for each pair of them is non empty then $F_1\cap G_1\in\mathcal{F}_1$, $F_2\cap G_2\in\mathcal{F}_2$, so $(F_1\cap F_2)\cap(G_1\cap G_2)=(F_1\cap G_1)\cap(F_2\cap G_2)$ is non empty and it is also in $\mathcal{F}$. Then $\mathcal{F}$ is a filter on $E$, and by using the similar way in the proof of Theorem \ref{filter conv with addition} we get the desired result.
\end{proof}

Let $(x_\alpha)_{\alpha\in I}$ be a net in any topological space. A point $x$ is called {\em cluster point} of $(x_\alpha)_{\alpha\in I}$ if, for each neighborhood $U$ of $x$, $(x_\alpha)_{\alpha\in I}$ is frequently in $U$, or equivalently, for each neighborhood $U$ of $x$, we have $(U\setminus \{x\})\cap \{x_\alpha\}\neq  \emptyset$). On the other hand, let $\mathcal{F}$ be a filter on a locally solid vector lattice $(E,\tau)$. A vector $e\in E$ is said to be {\em cluster point of $\mathcal{F}$} (with respect to $\mathcal{F}$) if every zero neighborhood which contains $e$ intersects with each member of $\mathcal{F}$; see \cite[Def.3.7]{J}. So, we give the following result which has a relation between filter convergence and cluster point.
\begin{thm}
Let $(E,\tau)$ be a locally solid vector lattice and $\mathcal{F}$ be a filter on the set $E$. Then the vector $e\in E$ is a cluster point of $\mathcal{F}$ iff there exists another filter $\mathcal{F}_1$ containing $\mathcal{F}$ such that  $\mathcal{F}_1\tc e$.
\end{thm}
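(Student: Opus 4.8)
The plan is to establish the two implications of the equivalence separately. The implication ``if a finer convergent filter exists, then $e$ is a cluster point'' is immediate and essentially repeats the argument of Remark~1.1$(i)$. The reverse implication is the substantive one: given that $e$ is a cluster point of $\mathcal{F}$, I must manufacture a filter $\mathcal{F}_1\supseteq\mathcal{F}$ with $\mathcal{F}_1\tc e$, and the natural candidate is the filter generated by all finite intersections $U\cap F$, where $U$ is a zero neighborhood containing $e$ and $F\in\mathcal{F}$.

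For the easy direction, suppose $\mathcal{F}\subseteq\mathcal{F}_1$ and $\mathcal{F}_1\tc e$. Fix a zero neighborhood $U$ with $e\in U$ and an arbitrary $F\in\mathcal{F}$. Then $U\in\mathcal{F}_1$ because $\mathcal{F}_1\tc e$, and $F\in\mathcal{F}_1$ because $\mathcal{F}\subseteq\mathcal{F}_1$, so $U\cap F\in\mathcal{F}_1$ and hence $U\cap F\neq\emptyset$ since $\emptyset\notin\mathcal{F}_1$. As $U$ and $F$ were arbitrary, $e$ is a cluster point of $\mathcal{F}$.

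For the converse, assume $e$ is a cluster point of $\mathcal{F}$ and let $\mathcal{F}_e=\{U\subseteq E: U\text{ is a zero neighborhood and }e\in U\}$ be the filter of Example~1.2. Set
$$
\mathcal{B}=\{\,U\cap F: U\in\mathcal{F}_e,\ F\in\mathcal{F}\,\}.
$$
The first task is to check that $\mathcal{B}$ is a filter base on $E$: it is nonempty because $\mathcal{F}_e$ and $\mathcal{F}$ are; each member $U\cap F$ is nonempty precisely because $e$ is a cluster point of $\mathcal{F}$, that is, the zero neighborhood $U\ni e$ meets every member of $\mathcal{F}$; and for $U_1\cap F_1,\ U_2\cap F_2\in\mathcal{B}$ one has $U_1\cap U_2\in\mathcal{F}_e$ and $F_1\cap F_2\in\mathcal{F}$, so $(U_1\cap U_2)\cap(F_1\cap F_2)\in\mathcal{B}$ and this set is contained in $(U_1\cap F_1)\cap(U_2\cap F_2)$. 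Let $\mathcal{F}_1$ be the filter generated by $\mathcal{B}$. For any $F\in\mathcal{F}$ and any $U\in\mathcal{F}_e$ we have $U\cap F\in\mathcal{B}$ with $U\cap F\subseteq F$, so $F\in\mathcal{F}_1$; thus $\mathcal{F}\subseteq\mathcal{F}_1$. Similarly, for any zero neighborhood $U$ containing $e$ and any $F\in\mathcal{F}$ we have $U\cap F\subseteq U$, so $U\in\mathcal{F}_1$; hence $\mathcal{F}_1\tc e$. This $\mathcal{F}_1$ is the filter asserted by the theorem.

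I do not expect a serious obstacle here; the one place where the hypothesis genuinely enters is the non-emptiness of the sets $U\cap F$, which is literally the definition of $e$ being a cluster point, and this is exactly what guarantees $\emptyset\notin\mathcal{F}_1$. The remaining points --- directedness of $\mathcal{B}$, and the inclusions $\mathcal{F}\subseteq\mathcal{F}_1$ and $\mathcal{F}_e\subseteq\mathcal{F}_1$ --- are routine consequences of $\mathcal{F}$ and $\mathcal{F}_e$ being closed under finite intersections. The only subtlety worth double-checking is that $\mathcal{F}_e$ is indeed nonempty (every zero neighborhood is absorbing, so a suitable scalar multiple of a basic zero neighborhood contains $e$), which is needed both for $\mathcal{B}$ to be nonempty and for the inclusion $\mathcal{F}\subseteq\mathcal{F}_1$ to go through.
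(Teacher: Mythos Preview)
Your proof is correct and follows essentially the same approach as the paper: both directions are argued identically, and for the substantive implication you construct the same filter base $\mathcal{B}=\{U\cap F\}$ and generate $\mathcal{F}_1$ from it. The only cosmetic difference is that the paper verifies $\mathcal{F}\subseteq\mathcal{F}_1$ and $\mathcal{F}_1\tc e$ by writing $F=E\cap F\in\mathcal{B}$ and $U=U\cap E\in\mathcal{B}$, whereas you use the inclusions $U\cap F\subseteq F$ and $U\cap F\subseteq U$; your version is arguably cleaner, and your added remark on why $\mathcal{F}_e$ is nonempty (via absorbency) fills a point the paper leaves implicit.
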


\begin{proof}
Suppose $e\in E$ is a cluster point of $\mathcal{F}$. Then the set $\mathcal{B}=\{U\cap F:F\in\mathcal{F}, U\ \text{is a zero neighborhood and contains}\ e\}$ is a filter base. Indeed, we show the properties of filter base. $(i)$ $\mathcal{B}$ is non empty since, for each zero neighborhood $U$ containing $e$ intersects with every member of $\mathcal{F}$; $(ii)$ for $U\cap F\in \mathcal{B}$, we have $U\cap F\neq \emptyset$ since $U\cap F\in \mathcal{F}$ and $\emptyset\notin \mathcal{F}$; $(iii)$ for $U_1\cap F_1$ and $U_2\cap F_2$ in $\mathcal{B}$, we can take $B=(U_1\cap U_2)\cap( F_1\cap F_2)\in \mathcal{B}$. So, we assume it generates the filter $\mathcal{F}_1$. For each $F \in \mathcal{F}$, we have $F=E\cap F\in \mathcal{B}$, and so we get $\mathcal{F}_1\subseteq\mathcal{F}_2$. Therefore, for given a zero neighborhood $U$ containing $e$, we have $U=U\cap E\in \mathcal{B}$, and so we get  $\mathcal{F}_1\tc e$.

Conversely, assume such filter $\mathcal{F}_1$ exists it means that $\mathcal{F}_1$ is a filter on the set $E$ with $\mathcal{F}\subseteq\mathcal{F}_1$ and $\mathcal{F}_1\tc e$. So, all zero neighborhoods containing $e$ is in $\mathcal{F}_1$. So, by the definition of filter, each zero neighborhood containing $e$ intersects with member of $\mathcal{F}_1$, otherwise $\emptyset \in \mathcal{F}_1$. Therefore, in particular, intersects each set in $\mathcal{F}$, and so we get $e$ is a cluster point of $\mathcal{F}$.
\end{proof}

Now, we use net to define filter on locally solid vector lattice. Let $(x_\alpha)_{\alpha\in I}$ be a net in the locally solid vector lattice $(E,\tau)$. Then we define its associated filter $\mathcal{F}$ on the set $E$ as follow; consider the tail $\hat{x}_\beta=\{x_\alpha:\alpha\in I,\alpha\geq\beta\}$ and $\mathcal{B}=\{\hat{x}_\beta:\beta\in I\}$. So, $\mathcal{B}$ is a filter base. Indeed, $(i)$ $\mathcal{B}$ is not empty; $(ii)$ every $\hat{x}_\beta\in \mathcal{B}$ is not empty since $I$ is directed set; $(iii)$ for any $\hat{x}_{\beta_1},\ \hat{x}_{\beta_2}\in \mathcal{B}$, consider the index $\beta=\max\{\beta_1,\beta_2\}$ so that $\hat{x}_\beta\subseteq \hat{x}_{\beta_1}\cap\hat{x}_{\beta_2}$ and  $\hat{x}_\beta \in \mathcal{B}$. Thus, the filter which is generated by $\mathcal{B}$ is the associated filter of $(x_\alpha)_{\alpha\in I}$.

\begin{thm}\label{net convergence and filter convergence}
Let $(E,\tau)$ be a locally solid vector lattice and $(x_\alpha)_{(\alpha\in I)}$ be a net in $E$. Assume $\mathcal{F}$ is the associated filter of $(x_\alpha)_{(\alpha\in I)}$ and $e\in E$. Then $x_\alpha\tc e$ as a net iff $\mathcal{F}\tc e$ as a filter. Moreover, $e$ is a cluster point of $(x_\alpha)_{(\alpha\in A)}$ iff $e$ is a cluster point of $\mathcal{F}$.
\end{thm}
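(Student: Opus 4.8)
The plan is to reduce the whole statement to the way the associated filter is built. By construction $\mathcal{F}$ is the filter generated by the base $\mathcal{B}=\{\hat{x}_\beta:\beta\in I\}$, so a set $F\subseteq E$ belongs to $\mathcal{F}$ precisely when it contains some tail $\hat{x}_\beta$, i.e.\ precisely when the net $(x_\alpha)$ is eventually in $F$; and $F$ intersects every member of $\mathcal{F}$ precisely when it intersects every tail, i.e.\ precisely when $(x_\alpha)$ is frequently in $F$. With these two reformulations, both biconditionals in the theorem turn into a comparison between the net notions ``eventually in'' / ``frequently in'' and the filter notions appearing in the definitions of $\mathcal{F}\tc e$ and of a cluster point of $\mathcal{F}$.

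First I would settle the convergence equivalence. For the direction ``$x_\alpha\tc e$ as a net $\Rightarrow$ $\mathcal{F}\tc e$'', take an arbitrary zero neighbourhood $U$ containing $e$; then $U$ is a neighbourhood of $e$, so $\tau$-convergence of $(x_\alpha)$ to $e$ makes the net eventually lie in $U$, hence $U\in\mathcal{F}$ by the reformulation above, and since $U$ was arbitrary $\mathcal{F}\tc e$. For the converse, assume $\mathcal{F}\tc e$, so every zero neighbourhood containing $e$ contains a tail of $(x_\alpha)$; given an arbitrary $\tau$-neighbourhood $W$ of $e$ I would look for a basic (solid) zero neighbourhood $U$ with $e\in U\subseteq W$, using the translation $W\mapsto W-e$, the base properties recalled in the introduction (downward directedness, the halving property, closure under scalar multiples), and solidness; once such a $U$ is found, $U\in\mathcal{F}$ forces the net eventually into $U\subseteq W$, giving $x_\alpha\tc e$.

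The cluster-point equivalence I would handle in the same spirit, with ``frequently'' replacing ``eventually''. If $e$ is a cluster point of the net, then for a zero neighbourhood $U$ containing $e$ and any $F\in\mathcal{F}$, pick $\beta$ with $\hat{x}_\beta\subseteq F$; frequent membership yields $\alpha\geq\beta$ with $x_\alpha\in U$, and then $x_\alpha\in U\cap F$, so $U$ meets $F$ and $e$ is a cluster point of $\mathcal{F}$. For the reverse implication one again sandwiches a given $\tau$-neighbourhood of $e$ by a zero neighbourhood containing $e$ and then transfers frequent membership. Alternatively, the cluster-point part can be derived from the convergence part together with the earlier theorem characterising the cluster points of $\mathcal{F}$ as the limits of filters finer than $\mathcal{F}$, since a filter finer than the associated filter corresponds to (the associated filter of) a subnet of $(x_\alpha)$.

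The step I expect to be the main obstacle is exactly this sandwiching used in both converse directions: producing, for a prescribed $\tau$-neighbourhood $W$ of $e$, a basic zero neighbourhood $U$ with $e\in U\subseteq W$. For $e=0$ it is immediate. For general $e$ it is delicate, because solidness forces any solid set containing $e$ to contain every $z\in E$ with $\lvert z\rvert\leq\lvert e\rvert$, so $U$ cannot simply be shrunk around $e$; making this work — or pinning down the extra hypothesis on $e$ or on the base that it requires — is where I would concentrate the effort.
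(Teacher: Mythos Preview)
Your approach is exactly the paper's: reduce everything to the tail description of the associated filter, so that ``$U\in\mathcal{F}$'' becomes ``$(x_\alpha)$ is eventually in $U$'' and ``$U$ meets every $F\in\mathcal{F}$'' becomes ``$(x_\alpha)$ is frequently in $U$''. The paper only writes out the convergence part and declares the cluster-point part analogous, just as you do; it does not use the finer-filter/subnet detour you mention as an alternative.

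On the sandwiching issue you single out: the paper simply does not confront it. In the converse direction the paper takes a zero neighbourhood $V$ containing $e$, observes $V\in\mathcal{F}$, extracts a tail $\hat{x}_{\alpha_0}\subseteq V$, and from this alone concludes $x_\alpha\tc e$ --- i.e.\ it implicitly treats ``eventually in every zero neighbourhood containing $e$'' as the definition of $x_\alpha\tc e$. Likewise in the forward direction the paper tacitly uses that a basic zero neighbourhood containing $e$ is a neighbourhood of $e$. So the obstacle you anticipate is one the paper's own proof bypasses rather than resolves; your write-up is already more careful than the original on this point, and no additional idea beyond what you have is used in the paper.
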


\begin{proof}
We show only the convergence part of proof, the cluster point case is analogous. Suppose $x_\alpha\tc e$ as a net. Since every locally solid vector lattice has a base of zero neighborhoods, we can consider any zero neighborhood $U$ which contains $e$. So, by definition of $\mathcal{F}$, we get $U\in \mathcal{F}$. Thus, we have $\mathcal{F}\tc e$ as a filter. 

Conversely, assume the filter $\mathcal{F}$ converges to $e$. Let $V$ be a zero neighborhood in $E$ and contains $e$. Then $V\in \mathcal{F}$. By definition of $\mathcal{F}$, there exists a index $\alpha_0$ such that $\hat{x}_{\alpha_0}\subseteq V$. Therefore, we get $x_\alpha\in V$ for every $\alpha\geq \alpha_0$, and so  $x_\alpha\tc e$.
\end{proof}

\begin{cor}
Let $(E,\tau)$ be a locally solid vector lattice which is generated by a family of Riesz pseudonorms $\{\rho_j\}_{j\in J}$, $(x_\alpha)_{(\alpha\in A)}$ be a net in $E$ and $\mathcal{F}$ be associated filter of it, and $e\in E$. Then $\mathcal{F}\tc e$  iff $\rho_j(x_\alpha-e)\to 0$ for all $j\in J$.
\end{cor}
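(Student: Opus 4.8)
The plan is to obtain this as an immediate consequence of \thmref{net convergence and filter convergence} together with the pseudonorm description of $\tau$-convergence of nets recalled in Section~1. So the argument is a two-link chain of equivalences, with no genuine new content to prove.

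First I would apply \thmref{net convergence and filter convergence}. Since $\mathcal{F}$ is by hypothesis the associated filter of the net $(x_\alpha)_{(\alpha\in A)}$, that theorem gives $\mathcal{F}\tc e$ as a filter if and only if $x_\alpha\tc e$ as a net in $(E,\tau)$. Next I would invoke the fact stated in the introduction (coming from \cite[Thm.2.28]{ABur}): when the locally solid topology $\tau$ is generated by the family of Riesz pseudonorms $\{\rho_j\}_{j\in J}$, one has $x_\alpha\tc e$ precisely when $\rho_j(x_\alpha-e)\to 0$ in $\mathbb{R}$ for every $j\in J$. Composing the two equivalences yields $\mathcal{F}\tc e$ iff $\rho_j(x_\alpha-e)\to 0$ for all $j\in J$, which is exactly the assertion.

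Since both ingredients are already in place, I do not expect any real obstacle. The only point deserving a line of care is bookkeeping about the index set: the net, its tails $\hat{x}_\beta=\{x_\alpha:\alpha\in A,\ \alpha\geq\beta\}$ used to build $\mathcal{F}$, and the scalar convergence $\rho_j(x_\alpha-e)\to 0$ all refer to the same directed set $A$, so that eventual smallness of $\rho_j(x_\alpha-e)$ corresponds exactly to a tail $\hat{x}_\beta$ lying inside the given $\rho_j$-ball; this is automatic but worth stating so the reader sees why the equivalence transports cleanly through \thmref{net convergence and filter convergence}.
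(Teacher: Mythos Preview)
Your proposal is correct and matches the paper's own proof exactly: the paper simply says the corollary follows from \cite[Thm.2.28]{ABur} and Theorem~\ref{net convergence and filter convergence}, which is precisely the two-link chain of equivalences you describe. Your additional remark about the common directed index set is harmless clarification, not a deviation.
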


\begin{proof}
It follows from \cite[Thm.2.28]{ABur} and Theorem \ref{net convergence and filter convergence}.
\end{proof}

We can give convergence of filters with respect to continuous function. Let $(E,\tau)$ and $(F,\acute{\tau})$ be locally solid vector lattices, and $\mathcal{F}$ be a filter on $E$. For a function $f:E\to F$, the set
$$
\acute{f}\mathcal{F}=\{B\subset F:f^{-1}(B)\in \mathcal{F}\}
$$
is the filter on $E$ generated by $\{f(A):A\subseteq \mathcal{F}\}$.
\begin{prop}
Suppose $(E,\tau)$ and $(F,\acute{\tau})$ are locally solid vector lattices. Then a function $f:E\to F$ is continuous iff $\acute{f}\mathcal{F}\tcc f(e)$ in $F$ for each $\mathcal{F}\tc e$ in $E$.
\end{prop}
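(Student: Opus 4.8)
\emph{Plan.} The statement is the classical equivalence ``$f$ continuous $\iff$ $f$ preserves filter convergence'', specialized to locally solid vector lattices. The plan is to prove the two implications separately, using only the definition of the image filter $\acute{f}\mathcal{F}=\{B\subseteq F:f^{-1}(B)\in\mathcal{F}\}$, the upward-closure of filters, and Remark~(ii) together with Example~1.3. Note that since $f$ is not assumed linear, continuity cannot be reduced to continuity at $0$; instead I will verify continuity at an arbitrary point $e\in E$.

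\emph{Necessity.} Assume $f$ is continuous and fix a filter $\mathcal{F}$ on $E$ with $\mathcal{F}\tc e$. To get $\acute{f}\mathcal{F}\tcc f(e)$, let $V$ be a zero neighborhood in $(F,\acute{\tau})$ containing $f(e)$. By continuity, $f^{-1}(V)$ is a neighborhood of $e$ in $(E,\tau)$, so, passing to the base of zero neighborhoods, it contains some zero neighborhood $U$ with $e\in U$. Since $\mathcal{F}\tc e$ we have $U\in\mathcal{F}$, and from $U\subseteq f^{-1}(V)$ and the upward-closure of $\mathcal{F}$ we obtain $f^{-1}(V)\in\mathcal{F}$, i.e. $V\in\acute{f}\mathcal{F}$. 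As $V$ was arbitrary, $\acute{f}\mathcal{F}\tcc f(e)$.

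\emph{Sufficiency.} Conversely, suppose $\acute{f}\mathcal{F}\tcc f(e)$ holds whenever $\mathcal{F}\tc e$. Fix $e\in E$ and, following Example~1.3, let $\mathcal{F}_e$ be the filter generated by the zero neighborhoods of $E$ that contain $e$; then $\mathcal{F}_e\tc e$, so by hypothesis $\acute{f}\mathcal{F}_e\tcc f(e)$. Hence for every zero neighborhood $V$ of $F$ with $f(e)\in V$ we have $V\in\acute{f}\mathcal{F}_e$, that is, $f^{-1}(V)\in\mathcal{F}_e$; so $f^{-1}(V)$ contains some zero neighborhood $U\ni e$, whence $f(U)\subseteq V$. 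This is continuity of $f$ at $e$, and since $e$ was arbitrary, $f$ is continuous.

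\emph{Main obstacle.} The filter-theoretic bookkeeping is routine: one should note that the collection of zero neighborhoods containing $e$ is only a filter base, so in the converse $\mathcal{F}_e$ must be taken to be the filter it generates, and one must check this generated filter really converges to $e$ (which is immediate from the Definition). The genuinely load-bearing step, in both directions, is the passage between the topological condition ``$f^{-1}(V)$ is a neighborhood of $e$'' and the filter membership ``$f^{-1}(V)\in\mathcal{F}$''; this is where one must invoke the base of solid zero neighborhoods of the locally solid topology (with the standard properties recalled in the introduction) to slip a zero neighborhood $U\ni e$ inside $f^{-1}(V)$.
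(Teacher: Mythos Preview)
Your proof is correct and follows essentially the same approach as the paper: both directions hinge on the neighborhood filter at $e$ (your $\mathcal{F}_e$, the paper's $\mathcal{N}_e$), and the converse is obtained by applying the hypothesis to this specific filter. The paper packages the argument through the characterization ``$\mathcal{F}\tc e$ iff $\mathcal{N}_e\subseteq\mathcal{F}$'' and then reasons with filter inclusions, whereas you unwind this and argue directly with individual zero neighborhoods; the content is the same.
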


\begin{proof}
Suppose $f$ is continuous. For fixed $e\in E$, consider the following set
$$
\mathcal{N}_e=\{A\subseteq E:\exists U \ \text{zero neighborhood s.t.}, e\in U\subseteq A\}.
$$
Thus, we get $\mathcal{N}_e\tc e$. Note also that a filter $\mathcal{F}$ converges to $e$ iff $\mathcal{N}_e\subset \mathcal{F}$. Take a filter $\mathcal{F}$ such that $\mathcal{F}\tc e$. The continuity of $f$ implies that $\mathcal{N}_{f(e)}\subseteq \acute{f}\mathcal{N}_{e}$. Therefore, if $\mathcal{F}\tc e$ then $\mathcal{N}_e\subseteq \mathcal{F}$, and so $\mathcal{N}_{f(e)}\subseteq \acute{f}\mathcal{N}_{e}\subseteq \acute{f}\mathcal{F}$ so that $\acute{f}\mathcal{F}\tcc f(e)$.
	
Assume $\acute{f}\mathcal{N}_e$ is the set of all subsets of $E$ whose preimage is a neighborhood of $e$. Since $\mathcal{N}\tc e$, we conclude that the preimage of any neighborhood of $f(e)$ is a neighborhood of $e$. Hence, $f$ is continuous.
\end{proof}


\end{document}